\numberwithin{equation}{section}
\newtheorem{definition}{Definition}[section]
\newtheorem{proposition}{Proposition}[section]
\newtheorem{theorem}{Theorem}[section]
\newtheorem{remark}{Remark}[section]
\newtheorem*{openq*}{\color{red}{Open Question}}
\newcommand{\E}{\mathbb{E}}
\newcommand{\cont}[1]{\stackrel{#1}{\frown}}
\newcommand{\smcont}[1]{\star_{#1}^{#1 -1}}
\newcommand{\R}{\mathbb{R}}
\newcommand{\norm}[1]{\left\lVert#1\right\rVert}
\newcommand{\abs}[1]{\left\lvert#1\right\rvert}
\newcommand{\curly}[1]{\left\lbrace#1\right\rbrace}
\newcommand{\innerprod}[1]{\left\langle#1\right\rangle}
\begin{document}

\title{\Large \bfseries{\scshape{Vector-valued semicircular limits on the free Poisson chaos}}}

\author{Solesne Bourguin}
\email{solesne.bourguin@gmail.com}
\affiliation{Department of Mathematics and Statistics, Boston University, 111 Cummington Mall, Boston, MA 02215, USA}

 \begin{abstract}
~\\
\noindent \textit{Abstract:} In this note, we prove a multidimensional counterpart of the central limit theorem on the free Poisson chaos recently proved by Bourguin and Peccati (2014). A noteworthy property of convergence toward the semicircular distribution on the free Poisson chaos is obtained as part of the limit theorem: component-wise convergence of sequences of vectors of multiple integrals with respect to a free Poisson random measure toward the semicircular distribution implies joint convergence. This result complements similar findings for the Wiener chaos by Peccati and Tudor (2005), the classical Poisson chaos by Peccati and Zheng (2010) and the Wigner chaos by Nourdin, Peccati and Speicher (2013).
~\\~\\
\textit{Keywords:} Fourth moment theorem, Diagram formula, Multidimensional free limit theorems, Semicircular distribution, Free Poisson chaos.
~\\~\\
\textit{2010 Mathematics Subject Classification:} 46L54, 81S25, 60H05.
 \end{abstract}


\maketitle

\section{Introduction and background}
\noindent Let $\left\lbrace W_t\colon t \geq 0 \right\rbrace $ be a standard Brownian motion on $\R_{+}$ and let $n \geq 1$ be an integer. Denote by $I_{n}^{W}\left( f\right)$ the multiple stochastic Wiener-It\^o integral of order $n$ of a symmetric function $f \in L^2\left( \R_{+}^{n}\right)$. Denote by $L^2_s\left( \R_{+}^{n}\right)$ the subset of $L^2\left( \R_{+}^{n}\right)$ composed of symmetric functions. The collection of random variables $\left\lbrace I_{n}^{W}\left( f\right) \colon f \in L^2_s\left( \R_{+}^{n}\right) \right\rbrace $ is what is usually called the $n$-th Wiener chaos associated with $W$. 
\\~\\
In a seminal paper of 2005, Nualart and Peccati \cite{nuapec1} proved that convergence to the standard normal distribution of an sequence of elements with variance one living inside a fixed Wiener chaos was equivalent to the convergence of the fourth moment of this sequence to three. This result, now known as the \textit{fourth moment theorem}, was proved to hold as well for sequences of vectors of multiple integrals possibly of different orders by Peccati and Tudor \cite{petu1} shortly after. On top of the multidimensional central limit theorem, they discovered that for sequences of fixed order chaos elements, component-wise convergence to the Gaussian distribution always implies joint convergence.  More specifically, these properties can be stated in the following way.
\begin{theorem}[Peccati and Tudor \cite{petu1}, 2005]
\label{wiener4thMomentTheorem}
Let $d \geq 2$ and $n_1, \ldots , n_{d}$ be integers, and let $\left\lbrace \left( F_{k}^{(1)}, \ldots , F_{k}^{(d)}\right) \colon k \geq 1\right\rbrace $ be a sequence of random vectors such that, for every $i = 1, \ldots , d$, the random variable $F_{k}^{(i)}$ lives inside the $n_i$-th Wiener chaos associated with $W$. Assume that for every $i,j = 1, \ldots , d$, $\E\left[F_{k}^{(i)}F_{k}^{(j)} \right] \underset{k \rightarrow \infty}{\longrightarrow} c(i,j)$, where $c = \left\lbrace c(i,j) \colon i,j = 1, \ldots , d \right\rbrace $ is a positive definite symmetric matrix. Then, the following three assertions are equivalent, as $k \rightarrow \infty$.
\begin{enumerate}
\item[(i)] The sequence of vectors $\left( F_{k}^{(1)}, \ldots , F_{k}^{(d)}\right)$ converges to the $d$-dimensional Gaussian distribution $\mathcal{N}(0,c)$.
\item[(ii)] For every $i = 1, \ldots , d$,  the sequence of random variables $F_{k}^{(i)}$ converges to the Gaussian distribution $\mathcal{N}(0,c(i,i))$.
\item[(iii)] For every $i = 1, \ldots , d$, $\E\left[ \left( F_{k}^{(i)}\right) ^4\right] \longrightarrow 3c(i,i)^2 = \E\left[ \mathcal{N}(0,c(i,i)) ^4\right]$.
\end{enumerate}
\end{theorem}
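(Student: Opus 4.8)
The plan is to dispose of the two easy equivalences first and then concentrate on the single genuinely multidimensional implication. Since the marginals of a convergent sequence of random vectors converge, the implication $(i) \Rightarrow (ii)$ is immediate. Read componentwise, the equivalence $(ii) \Leftrightarrow (iii)$ is nothing but the one-dimensional fourth moment theorem of Nualart and Peccati: for the $i$-th coordinate, which lives in a fixed Wiener chaos and has variance tending to $c(i,i)$, convergence to $\mathcal{N}(0,c(i,i))$ holds if and only if $\E[(F_k^{(i)})^4] \to 3c(i,i)^2$. Consequently the entire content of the theorem reduces to proving $(ii) \Rightarrow (i)$, i.e. that componentwise convergence, together with the assumed convergence of the covariances to $c$, forces \emph{joint} convergence.

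First I would restate hypothesis $(ii)$ in the language of contractions. Writing $F_k^{(i)} = I_{n_i}^{W}(f_k^{(i)})$ with $f_k^{(i)} \in L^2_s(\R_+^{n_i})$, the proof of the Nualart--Peccati theorem shows that $(ii)$ for the $i$-th coordinate is equivalent, given the convergence of variances, to the vanishing of all \emph{pure} contraction norms $\norm{f_k^{(i)} \otimes_r f_k^{(i)}} \to 0$ for $r = 1, \ldots, n_i - 1$. The next step is to propagate these conditions to all the \emph{mixed} contractions. This rests on the Cauchy--Schwarz-type estimate $\norm{f_k^{(i)} \otimes_r f_k^{(j)}}^2 \leq \norm{f_k^{(i)} \otimes_{n_i - r} f_k^{(i)}} \cdot \norm{f_k^{(j)} \otimes_{n_j - r} f_k^{(j)}}$, valid for $1 \leq r \leq \min(n_i, n_j)$, which bounds each mixed contraction by the geometric mean of two pure ones; whenever $r$ lies in the partial range, the right-hand side tends to $0$, so the mixed contractions vanish as well.

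With all nontrivial contractions under control, I would establish joint convergence by the method of moments and cumulants. This is legitimate here because hypercontractivity on a fixed Wiener chaos makes all joint moments finite and uniformly bounded along the sequence, and because the limiting Gaussian vector $\mathcal{N}(0,c)$ is determined by its moments. A sequence of vectors converges to $\mathcal{N}(0,c)$ precisely when the covariances converge to $c(i,j)$ — which is assumed — and every joint cumulant of order $\geq 3$ tends to zero. Expanding these higher joint cumulants through the multiplication formula and the diagram formula for multiple Wiener integrals, each contributing term is an integral dominated, after Cauchy--Schwarz, by a product containing at least one factor of the form $\norm{f_k^{(i)} \otimes_r f_k^{(j)}}$ with the contraction index $r$ strictly partial; since all such norms vanish, so do all joint cumulants of order $\geq 3$, and joint Gaussian convergence follows.

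The main obstacle is the combinatorial bookkeeping in the diagram/cumulant expansion. One must verify that every connected diagram contributing to a joint cumulant of order $\geq 3$ necessarily produces at least one genuinely partial (neither trivial nor full) contraction factor, so that the entire term is captured by the vanishing contraction norms, while the full-contraction pairings are exactly those feeding the covariance and hence reproducing the Gaussian moments. Setting up the Cauchy--Schwarz decoupling that converts mixed contractions into pure ones, and keeping track of the index arithmetic across kernels of different orders $n_i \neq n_j$, is the technical heart of the argument; once the one-dimensional theorem has been translated into contraction language, the remainder is essentially routine.
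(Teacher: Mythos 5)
This statement is quoted from Peccati and Tudor (2005) as background; the paper gives no proof of it, so there is nothing internal to compare your argument against line by line. Judged on its own terms, your proposal is a correct and coherent plan, but it follows the moment--cumulant--diagram route rather than the stochastic-calculus route of the original reference: Peccati and Tudor prove $(ii)\Rightarrow(i)$ via It\^o calculus and convergence of quadratic covariations (a Dambis--Dubins--Schwarz type argument), with no diagrams at all. Your route --- translate the one-dimensional fourth moment theorem into the vanishing of partial self-contractions $\norm{f_k^{(i)}\otimes_r f_k^{(i)}}\to 0$, pass to mixed contractions by the Cauchy--Schwarz bound $\norm{f_k^{(i)}\otimes_r f_k^{(j)}}^2\le\norm{f_k^{(i)}\otimes_{n_i-r}f_k^{(i)}}\,\norm{f_k^{(j)}\otimes_{n_j-r}f_k^{(j)}}$, then kill all joint cumulants of order at least three through the diagram formula --- is exactly the architecture this paper uses for its free Poisson analogue (Theorem 1.3 via Proposition 4.2), and it is also how Nourdin, Peccati and Speicher handle the Wigner case. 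What your approach buys is uniformity across the classical and free settings and a proof that isolates the single combinatorial fact doing the work; what the original buys is brevity, since it never needs the diagram bookkeeping. The one point you should make fully explicit is the degenerate case in your combinatorial claim: when two blocks of a diagram are joined by $\ell=n_{i_s}=n_{i_t}$ edges the Cauchy--Schwarz bound degenerates to $\norm{f_k^{(i_s)}}^2\norm{f_k^{(i_t)}}^2$, which does not vanish; you must observe that such a full linkage exhausts both blocks and therefore cannot occur inside a \emph{connected} diagram on $r\ge 3$ blocks (this is precisely the role of the $\delta_{n_s}^{n_{s+1}}$ correction in the paper's Proposition 4.2). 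You also need the complementary remark that the non-vanishing factor in each Cauchy--Schwarz split (the rest of the diagram, and the $\otimes_0$ term when $\ell=n_{i_s}<n_{i_t}$) stays bounded, which in the Gaussian setting follows from the convergence of the variances. With those two points nailed down, the argument closes.
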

\noindent Non-commutative counterparts of these theorems have been established in the context of the chaos associated with a free Brownian motion $\left\lbrace S_t\colon t \geq 0 \right\rbrace $ in \cite{kenopesp1} for the one-dimensional limit theorem and in \cite{nopesp1} for the multidimensional version for which the authors proved that the property that component-wise convergence implies joint convergence holds as in the classical Brownian case. This result can be stated in the following way.
\begin{theorem}[Nourdin, Peccati and Speicher \cite{nopesp1}, 2013]
\label{wigner4thMomentTheorem}
Let $d \geq 2$ and $n_1, \ldots , n_{d}$ be integers, and consider a positive definite symmetric matrix  $c = \left\lbrace c(i,j) \colon i,j = 1, \ldots , d \right\rbrace $. Let $\left(s_1, \ldots , s_d \right) $ be a semicircular family with covariance $c$ (see Definition \ref{semicircfamily}). For each $i = 1, \ldots , d$, we consider a sequence $\left\lbrace f_{k}^{(i)} \colon k\geq 1\right\rbrace $ of mirror-symmetric (see Definition \ref{mirrorsymdef}) functions in $L^2\left( \R_{+}^{n_{i}}\right) $ such that, for all $i,j = 1, \dots , d$, $$\varphi\left[  I_{n_{i}}^{S}\left( f_{k}^{(i)}\right) I_{n_{j}}^{S}\left( f_{k}^{(j)}\right)\right]  \underset{k \rightarrow \infty}{\longrightarrow} c(i,j).$$ Then, the following three assertions are equivalent, as $k \rightarrow \infty$.
\begin{enumerate}
\item[(i)] The vector $\left( I_{n_{1}}^{S}\left( f_{k}^{(1)}\right), \ldots , I_{n_{d}}^{S}\left( f_{k}^{(d)}\right)\right)$ converges in distribution to $\left(s_1, \ldots , s_d \right)$.
\item[(ii)] For every $i = 1, \ldots , d$,  the random variable $I_{n_{i}}^{S}\left( f_{k}^{(i)}\right)$ converges to $s_i$.
\item[(iii)] For every $i = 1, \ldots , d$, $\varphi\left[ I_{n_{i}}^{S}\left( f_{k}^{(i)}\right) ^4\right] \longrightarrow 2c(i,i)^2 = \varphi\left[ s_i ^4\right]$.
\end{enumerate}
\end{theorem}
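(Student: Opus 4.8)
The plan is to prove the equivalence through the cyclic scheme $(i)\Rightarrow(ii)\Rightarrow(iii)\Rightarrow(i)$, with the first two implications soft and the third carrying all of the analytic weight. The implication $(i)\Rightarrow(ii)$ is immediate, since joint convergence in distribution of a vector forces convergence of each of its marginals. For $(ii)\Rightarrow(iii)$, I would use that an element of a fixed Wigner chaos is bounded, with operator norm controlled by the $L^2$-norm of its kernel, so that all of its $\varphi$-moments are comparable (Wigner-chaos hypercontractivity); consequently convergence in distribution to the compactly supported $s_i$ upgrades to convergence of every moment, and in particular $\varphi\left[I_{n_i}^S(f_k^{(i)})^4\right]\to\varphi\left[s_i^4\right]=2c(i,i)^2$. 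If, as is customary in free probability, convergence in law is read as convergence of all $*$-moments, this step is a tautology.

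The substance is $(iii)\Rightarrow(i)$, which I would reduce to the behaviour of contractions. For each fixed $i$ the scalar hypotheses $\varphi\left[I_{n_i}^S(f_k^{(i)})^2\right]\to c(i,i)$ and $\varphi\left[I_{n_i}^S(f_k^{(i)})^4\right]\to 2c(i,i)^2$ place us, after normalising the variance to one, exactly in the one-dimensional fourth moment theorem on the free Wigner chaos of \cite{kenopesp1}; invoking it yields $\norm{f_k^{(i)}\cont{p}f_k^{(i)}}\to 0$ for every $i$ and every $p\in\{1,\ldots,n_i-1\}$. I would then propagate this to mixed kernels through the Cauchy--Schwarz estimate for Wigner contractions, namely
\[
\norm{f\cont{p}g}^2\le \norm{f\cont{n-p}f}\,\norm{g\cont{m-p}g}
\]
for $f\in L^2(\R_+^n)$, $g\in L^2(\R_+^m)$ and $1\le p\le\min(n,m)$. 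Applied with $f=f_k^{(i)}$ and $g=f_k^{(j)}$, the right-hand side is a product of self-contractions of orders strictly inside the admissible ranges whenever the pairing is not a complete one, so that $\norm{f_k^{(i)}\cont{p}f_k^{(j)}}\to 0$ for every pair $(i,j)$ and every order $p$ except the full-pairing case $n_i=n_j$, $p=n_i$, which instead converges to the covariance $c(i,j)$.

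I would conclude by the method of moments, which is legitimate because the semicircular family $(s_1,\ldots,s_d)$ is determined by its compactly supported distribution, with mixed moments given by the non-crossing Wick formula
\[
\varphi\left[s_{i_1}\cdots s_{i_m}\right]=\sum_{\pi\in NC_2(\{1,\ldots,m\})}\ \prod_{\{a,b\}\in\pi}c(i_a,i_b).
\]
Expanding $\varphi\left[I_{n_{i_1}}^S(f_k^{(i_1)})\cdots I_{n_{i_m}}^S(f_k^{(i_m)})\right]$ by iterating the Wigner product formula $I_a^S(f)\,I_b^S(g)=\sum_{p=0}^{a\wedge b}I_{a+b-2p}^S\!\left(f\cont{p}g\right)$ and using that $\varphi$ annihilates every integral of order $\ge 1$, the mixed moment becomes a finite sum indexed by non-crossing diagrams. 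The diagrams in which each integral is completely paired with a single partner converge, by the covariance hypothesis, to $\prod_{\{a,b\}\in\pi}c(i_a,i_b)$ and reproduce exactly the sum above; every other diagram carries a factor $f_k^{(i)}\cont{p}f_k^{(j)}$ of intermediate order, whose norm tends to $0$ by the previous step, so those contributions vanish. This delivers convergence of all mixed moments, hence $(i)$.

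The main obstacle is the last step: translating the iterated product formula into the correct non-crossing combinatorics, identifying precisely which diagrams survive in the limit, and verifying that each non-surviving diagram genuinely contains one of the intermediate contraction factors shown to vanish. By comparison, the moment-determinacy and marginal-convergence arguments underlying the soft implications are routine.
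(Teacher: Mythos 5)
This theorem is stated in the paper as a quotation from \cite{nopesp1} and is not proved here, so the natural comparison is with the original argument of Nourdin, Peccati and Speicher and with the paper's own proof of its free Poisson analogue (Theorem \ref{maintheorem}), which follows the same skeleton. Your route is exactly that skeleton: the soft implications are as you describe (in the $W^*$-setting convergence in law is convergence of moments, so $(i)\Rightarrow(ii)\Rightarrow(iii)$ is essentially immediate); the one-dimensional fourth moment theorem of \cite{kenopesp1} converts $(iii)$ into the vanishing of the self-contractions $f_k^{(i)}\cont{p}f_k^{(i)}$ for $1\le p\le n_i-1$; the identity $\norm{f\cont{p}g}^2=\innerprod{f\cont{n-p}f,\, g\cont{m-p}g}$ for mirror-symmetric kernels (the arc analogue of the star-contraction identity the paper uses at \eqref{upperboundsurlanormecont}) propagates this to mixed contractions via Cauchy--Schwarz; and the mixed moments are expanded over non-crossing pairings respecting $n_{i_1}\otimes\cdots\otimes n_{i_r}$, with the complete block-pairings reproducing the semicircular Wick sum. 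All of this is correct and is the intended proof.

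The genuine gap is the step you yourself flag as ``the main obstacle,'' and it is not a formality. To finish one must: (a) decompose an arbitrary respecting non-crossing pairing into connected components relative to the blocks of $n_{i_1}\otimes\cdots\otimes n_{i_r}$; (b) observe that a component touching exactly two blocks is forced, by the respecting and non-crossing conditions, to be the complete nested pairing of those two blocks (so $n_{i_a}=n_{i_b}$ and the partition integral is $\innerprod{f_k^{(i_a)},f_k^{(i_b)}}$), hence every non-surviving diagram has a component spanning at least three blocks; and (c) show that a connected component spanning three or more blocks links some two adjacent blocks $B_s,B_{s+1}$ by a nested family of exactly $\ell$ pairs with $\ell$ not equal to both $n_s$ and $n_{s+1}$ (connectivity excludes $\ell=n_s=n_{s+1}$), so that Cauchy--Schwarz extracts a factor $\norm{f_k^{(i_s)}\cont{\ell}f_k^{(i_{s+1})}}\rightarrow 0$ while the complementary factor stays bounded by products of the (uniformly bounded, thanks to the covariance hypothesis) $L^2$ norms of the kernels. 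This is precisely \cite[Proposition 3.1]{nopesp1}, and it is the Wigner counterpart of Proposition \ref{convzerosecondpart} here, where the bookkeeping is strictly harder because star contractions and blocks of cardinality at least three also occur and tameness is needed to bound the complementary factor. Until (a)--(c) are carried out, the assertion that ``every other diagram carries an intermediate contraction factor'' is a claim, not a proof.
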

\noindent These results, both in the classical as well as in the free case, provide a dramatic simplification to the more conventional method of moments and cumulants in the sense that it is enough to control the second and the fourth moments of the components of a sequence of vector-valued multiple integrals with respect to a classical or free Brownian motion to ensure convergence to a central limit. These theorems have led to a wide collection of new results and inspired several new research directions -- see the following constantly updated webpage that lists all the results directly connected to the fourth moment theorems or the techniques devellopped in their proofs. 
\begin{center}
\href{https://sites.google.com/site/malliavinstein/home}{https://sites.google.com/site/malliavinstein/home}
\end{center}
In the classical probability setting, the quest for similar results and properties in the framework of the chaoses associated to a Poisson random measure have lead to a wide range of results, both theoretical as well as applied to fields as diverse as stochastic geometry (see e.g. \cite{bope2,laspenschtha1,resc1,sch1,schtha2,schtha1}) or cosmological statistics (see e.g. \cite{bodumape1}). Recently, Bourguin and Peccati proved in \cite{bope1} that a fourth moment theorem holds on the chaos associated with a free Poisson random measure. In view of this result, a natural question is to assess whether or not a multidimensional version of this fourth moment theorem can be established and if the property that component-wise convergence to the semicircular distribution always implies joint convergence holds as is the case on the Brownian and free Brownian chaoses.
\\~\\
This note provides a positive answer to this question (see Theorem \ref{maintheorem}), hence completing the line of chaotic multidimensional limit theorems initiated in \cite{petu1} and further developed in \cite{pezh1, nopesp1}.
\begin{theorem}
\label{maintheorem}
Let $d\geq 2$ and $n_1,\ldots , n_d$ be some fixed integers. Let $c = \curly{c(i,j) \colon i,j=1,\ldots ,d}$ be a positive definite symmetric matrix and consider $\left(s_1,\ldots, s_d\right)$ to be a semicircular family with covariance $c$ (see Definition \ref{semicircfamily}). For each $i=1,\dots,d$, let $\curly{f_{k}^{(i)} \colon k \geq 1}$ be a sequence of tamed (see Definition \ref{tameddef}) mirror-symmetric (see Definition \ref{mirrorsymdef}) functions in $L^{2}\left(\R_{+}^{n_i}\right)$ such that, for all $i,j =1,\ldots ,d$, 
\begin{equation*}
	\varphi\left[I_{n_i}^{\hat{N}}\left(f_{k}^{(i)}\right)I_{n_j}^{\hat{N}}\left(f_{k}^{(j)}\right)\right] \underset{k \rightarrow \infty}{\longrightarrow} c(i,j).
\end{equation*}
Then, the following three statements are equivalent.
\begin{enumerate}
\item[(i)] The vector $\left(I_{n_1}^{\hat{N}}\left(f_{k}^{(1)}\right), \ldots ,I_{n_d}^{\hat{N}}\left(f_{k}^{(d)}\right) \right)$  converges in distribution to $\left(s_1,\ldots, s_d\right)$.
\item[(ii)] For each $i=1,\dots,d$, the random variable $I_{n_i}^{\hat{N}}\left(f_{k}^{(i)}\right)$ converges in distribution to $s_i$.
\item[(iii)] For each $i=1,\dots,d$, 
\begin{equation*}
	\varphi\left[I_{n_i}^{\hat{N}}\left(f_{k}^{(i)}\right)^4\right] \underset{k \rightarrow \infty}{\longrightarrow} 2c(i,i)^2.
\end{equation*}
\end{enumerate}
\end{theorem}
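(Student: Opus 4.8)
The plan is to establish the three implications with the genuinely new content concentrated in the passage from componentwise to joint convergence. The implication (i) $\Rightarrow$ (ii) is immediate: convergence in distribution of the vector to $(s_1,\ldots,s_d)$ means convergence of every joint moment, and restricting to the moments of a single coordinate shows that $I_{n_i}^{\hat N}(f_k^{(i)})$ converges to $s_i$. The equivalence (ii) $\Leftrightarrow$ (iii) is handled one coordinate at a time: for each fixed $i$, the sequence $\{f_k^{(i)}\}$ is tamed and mirror-symmetric and its variance converges to $c(i,i)$, so the one-dimensional fourth moment theorem on the free Poisson chaos of Bourguin and Peccati \cite{bope1} applies verbatim and gives that $I_{n_i}^{\hat N}(f_k^{(i)}) \to s_i$ if and only if $\varphi[I_{n_i}^{\hat N}(f_k^{(i)})^4] \to 2c(i,i)^2$.

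The crux is therefore (iii) $\Rightarrow$ (i): componentwise fourth-moment control must be shown to force joint semicircular convergence. First I would invoke the kernel-level machinery behind \cite{bope1} to restate (iii): for each fixed $i$, the convergence $\varphi[I_{n_i}^{\hat N}(f_k^{(i)})^4] \to 2c(i,i)^2$ is equivalent to the vanishing in $L^2$, as $k \to \infty$, of all nontrivial ordinary contractions $f_k^{(i)} \cont{r} f_k^{(i)}$ together with the relevant star contractions, of the types $\smcont{r}$ and $\spcont{r}$, of $f_k^{(i)}$ with itself. The decisive observation, mirroring the Wigner argument of Nourdin, Peccati and Speicher \cite{nopesp1}, is that each \emph{mixed} contraction norm, $\norm{f_k^{(i)} \cont{r} f_k^{(j)}}$ and its star analogues, can be dominated through a Cauchy--Schwarz estimate at the level of the integral kernels by \emph{diagonal} contraction norms of $f_k^{(i)}$ and of $f_k^{(j)}$. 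Hence the vanishing of all diagonal contractions supplied by (iii) propagates to the vanishing of every mixed contraction, for both the ordinary and the star families.

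With every contraction — diagonal and mixed, ordinary and star — tending to $0$, the final step is to identify the limiting joint moments. Applying the product and diagram formulae for multiple integrals against the free Poisson measure, an arbitrary mixed moment $\varphi[I_{n_{i_1}}^{\hat N}(f_k^{(i_1)}) \cdots I_{n_{i_m}}^{\hat N}(f_k^{(i_m)})]$ expands as a finite sum of iterated contractions of the kernels $f_k^{(i_1)}, \ldots, f_k^{(i_m)}$. I would show that every term carrying at least one proper partial contraction or one star contraction is annihilated in the limit by the estimates above, so that the only surviving contributions come from diagrams that completely pair the integrands; by the non-crossing nature of the free diagram formula these reduce to the non-crossing pair partitions of $\{1,\ldots,m\}$, each surviving pairing contributing $\prod_{\{a,b\}} c(i_a,i_b)$ in the limit (using $\varphi[I_{n_i}^{\hat N}(f_k^{(i)}) I_{n_j}^{\hat N}(f_k^{(j)})] \to c(i,j)$ for the paired factors). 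The resulting limit is exactly the joint moment of the semicircular family $(s_1,\ldots,s_d)$ with covariance $c$, which is statement (i).

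The step I expect to be the main obstacle is the combinatorial and analytic bookkeeping forced by the free Poisson structure. Unlike the Wigner chaos, where only ordinary contractions $\cont{r}$ appear, here the product formula carries the extra star contractions $\smcont{r}$ and $\spcont{r}$ coming from the diagonal (jump) part of the random measure, and one must verify that the tamedness hypothesis keeps all of these contractions in $L^2$ and that each admits a Cauchy--Schwarz domination by diagonal terms. Confirming that the fourth-moment condition is genuinely equivalent to the vanishing of the full list of diagonal contractions, and then checking that every diagram containing a star contraction or a proper partial contraction is negligible in the limit, is where the bulk of the careful estimation will concentrate.
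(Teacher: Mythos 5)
Your proposal follows essentially the same route as the paper: the equivalence (ii) $\Leftrightarrow$ (iii) via the one-dimensional fourth moment theorem of Bourguin and Peccati, the reformulation of (iii) as the vanishing of the diagonal arc and star contractions, the diagram formula to expand mixed moments over non-crossing partitions without singletons, the Cauchy--Schwarz domination of mixed contractions by diagonal ones (combined with tameness to control the remaining factors), and the identification of the surviving non-crossing pairings with the semicircular joint moments as in Nourdin--Peccati--Speicher. The combinatorial bookkeeping you flag as the main obstacle is precisely what the paper's Proposition 4.2 carries out, by decomposing each partition into connected components and isolating an interval block linking two adjacent tensor factors.
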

\begin{remark}
The notion of a {\it tamed sequence} of functions appearing in Theorem \ref{maintheorem} and formally defined in Definition \ref{tameddef} is needed in order to deal with the complicated combinatorial structures arising from the computation of moments. This notion was first introduced in \cite{bope1} and is similar to the uniform boundedness assumption on the sequences of functions used in \cite[Proposition 3.1]{nopesp1}.
\end{remark}
\noindent Combining Theorem \ref{maintheorem} with \cite[Theorem 1.2, Point (B)]{bope1}, one obtains the following counterexample to the multidimensional transfer principle proved to hold in the Brownian and free Brownian framework in \cite[Theorem 1.6]{nopesp1}.
\begin{theorem}
Let $d \geq 2$ and let $n_1, \ldots , n_d$ be some fixed integers such that $n_1 + \cdots + n_d > d$. Let $\hat{\eta}$ denote a centered Poisson process with Lebesgue control measure. Then, there exist sequences $\left\lbrace g_{k}^{(i)} \colon k \geq 1\right\rbrace $, $i = 1 , \ldots ,d$, such that, as $n \rightarrow \infty$, the vector $\left(I_{n_1}^{\hat{N}}\left(g_{k}^{(1)}\right), \ldots ,I_{n_d}^{\hat{N}}\left(g_{k}^{(d)}\right) \right)$  converges in distribution to a $d$-dimensional semicircular distribution while the vector $\left(I_{n_1}^{\hat{\eta}}\left(g_{k}^{(1)}\right), \ldots ,I_{n_d}^{\hat{\eta}}\left(g_{k}^{(d)}\right) \right)$  converges in distribution to a $d$-dimensional Poisson distribution.
\end{theorem}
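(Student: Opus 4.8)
\emph{Proof proposal.} The plan is to treat this statement as a genuine failure of transfer and to assemble it from two ingredients: the one-dimensional free/classical discrepancy recorded in \cite[Theorem 1.2, Point (B)]{bope1}, and the component-wise-to-joint upgrade for semicircular limits provided by Theorem \ref{maintheorem}. First I would apply Point (B) of \cite{bope1} coordinate by coordinate to obtain, for each $i=1,\ldots,d$, a sequence $\curly{h_k^{(i)}\colon k\geq 1}$ of tamed mirror-symmetric kernels in $L^2\left(\R_+^{n_i}\right)$ for which the free Poisson integral $I_{n_i}^{\hat N}\left(h_k^{(i)}\right)$ converges in distribution to a standard semicircular element while the classical Poisson integral $I_{n_i}^{\hat\eta}\left(h_k^{(i)}\right)$ converges to a centered Poisson random variable. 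The role of the hypothesis $n_1+\cdots+n_d>d$ is precisely to make such discrepant building blocks available through \cite{bope1}: since every $n_i\geq 1$, the condition rules out the degenerate case $n_1=\cdots=n_d=1$, in which the first-order free and classical integrals share one and the same cumulant sequence, so that no breakdown of the transfer principle can occur.

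Next I would turn the collection of scalar building blocks into a single family of vector-valued kernels $\curly{\left(g_k^{(1)},\ldots,g_k^{(d)}\right)\colon k\geq 1}$ by translating the $i$-th block into a region of $\R_+^{n_i}$ whose coordinate projections are disjoint from those of the other blocks. This disjointness is the technical device that lets the two regimes coexist. On the classical side, multiple integrals built from kernels with coordinate-disjoint supports are stochastically independent, so the vector $\left(I_{n_1}^{\hat\eta}\left(g_k^{(1)}\right),\ldots,I_{n_d}^{\hat\eta}\left(g_k^{(d)}\right)\right)$ converges jointly to the product of the component-wise centered Poisson limits, that is, to a $d$-dimensional Poisson distribution. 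On the free side, disjointness forces the mixed star-contractions, and hence the off-diagonal pseudo-covariances $\varphi\left[I_{n_i}^{\hat N}\left(g_k^{(i)}\right)I_{n_j}^{\hat N}\left(g_k^{(j)}\right)\right]$ with $i\neq j$, to vanish in the limit, while the diagonal terms converge to the normalized variances; this places the family squarely in the hypotheses of Theorem \ref{maintheorem}, with a diagonal covariance matrix $c$.

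With these preparations, the free conclusion is immediate: each $I_{n_i}^{\hat N}\left(g_k^{(i)}\right)$ converges to a semicircular element (inherited from Point (B) and unaffected by the translation), so the implication (ii) $\Rightarrow$ (i) of Theorem \ref{maintheorem} upgrades this component-wise convergence to joint convergence of $\left(I_{n_1}^{\hat N}\left(g_k^{(1)}\right),\ldots,I_{n_d}^{\hat N}\left(g_k^{(d)}\right)\right)$ toward a $d$-dimensional semicircular distribution, completing the counterexample. I expect the main obstacle to be the simultaneous bookkeeping rather than any single hard estimate: one must verify that the disjoint embedding preserves both the individual limits of Point (B) and, crucially, the tamedness (Definition \ref{tameddef}) and mirror-symmetry (Definition \ref{mirrorsymdef}) that Theorem \ref{maintheorem} requires, and one must confirm that the covariance normalization is consistent across the two measures. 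It is worth noting that disjointness already yields free independence of the components, so the free joint limit could in principle be read off directly; invoking Theorem \ref{maintheorem} is nonetheless the conceptually correct route, as it is exactly the component-wise-implies-joint mechanism whose failure to transfer to the classical Poisson setting this theorem is designed to exhibit.
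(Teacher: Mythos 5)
Your proposal follows exactly the paper's route: the paper derives this theorem in a single line, as the combination of the one-dimensional counterexample of \cite[Theorem 1.2, Point (B)]{bope1} with the component-wise-implies-joint mechanism of Theorem \ref{maintheorem}, which is precisely the architecture you describe (your disjoint-support embedding and covariance bookkeeping simply flesh out details the paper leaves implicit). The only caveat is that your reading of the hypothesis $n_1+\cdots+n_d>d$ as merely excluding $n_1=\cdots=n_d=1$ sits uneasily with applying Point (B) to \emph{every} coordinate, which would require each $n_i\geq 2$; the paper, however, offers no more detail on this point than you do.
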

\noindent The remainder of the paper is organized as follows. Section \ref{sectionrelelements} contains the most relevant elements of free probability theory needed in order to make this note as self-contained as possible. Theorem \ref{maintheorem} is proved in Section \ref{proofmaintheo} while Section \ref{auxsection} gathers technical results used in the proof of Theorem \ref{maintheorem}.
\section{Relevant elements of free probability}
\label{sectionrelelements}

\noindent This section list the most relevant elements of free probability theory used in the present note. For more details on the tools and notions used below, the reader is referred to the references \cite{bope1} and \cite{nisp}.
\subsection{Free probability, free Poisson process and stochastic integrals}
\noindent Let $\left( \mathscr{A}, \varphi \right) $ be a tracial $W^*$-probability space, that is $\mathscr{A}$ is a von Neumann algebra with involution $*$ and $\varphi \colon \mathscr{A} \rightarrow \mathbb{C}$ is a unital linear functional assumed to be weakly continuous, positive (meaning that $\varphi\left( X\right) \geq 0$ whenever  $X$ is a non-negative element of $\mathscr{A}$), faithful (meaning that $\varphi\left(XX^* \right) = 0 \Rightarrow X = 0$ for every $X \in \mathscr{A}$) and tracial (meaning that $\varphi\left(XY \right) = \varphi\left(YX \right) $ for all $X,Y \in \mathscr{A}$). The self-adjoint elements of $\mathscr{A}$ will be referred as random variables. Given a random variable $X \in \mathscr{A}$, the law of $X$ is defined, as in \cite[Proposition 3.13]{nisp}, to be the unique Borel measure on $\R$ having the same moments as $X$. The non-commutative space $L^2(\mathscr{A},\varphi)$ denotes the completion of $\mathscr{A}$ with respect to the norm $\norm{X}_2 = \sqrt{\varphi\left( XX^* \right) }$.
\begin{definition}
A collection of random variables $X_1, \ldots , X_n$ on $\left( \mathscr{A}, \varphi \right) $ is said to be free if $$\varphi\left( \left[P_1\left(X_{i_1}\right) - \varphi\left( P_1\left(X_{i_1}\right)\right)   \right] \cdots \left[P_m\left(X_{i_m}\right) - \varphi\left( P_m\left(X_{i_m}\right)\right)   \right] \right) = 0 $$ whenever $P_1, \ldots , P_m$ are polynomials and $i_1, \ldots , i_m \in \left\lbrace 1, \ldots, n\right\rbrace $ are indices with no two adjacent $i_j$ equal.
\end{definition}
\begin{definition}
The centered semicircular distribution with variance $t>0$, denoted by $\mathcal{S}(0,t)$, is the probability distribution given by $$\mathcal{S}(0,t)(dx) = (2\pi t)^{-1}\sqrt{4t-x^2}dx, \quad |x|< 2\sqrt{t}.$$ The symmetric aspect of this distribution around zero guaranties that all its odd moments are zero. Furthermore, it is straightforward to check that the even moments are given, for any non-negative integer $m$, by $\varphi\left(\mathcal{S}(0,t)^{2m} \right) = C_m t^m$, where $C_m = \frac{1}{m+1}\binom{2m}{m}$ is the $m$-th Catalan number. 
\end{definition}
\begin{definition}
The free Poisson distribution with rate $\lambda >0 $, denoted by $P(\lambda)$, is the probability distribution defined 
as follows: (i) if $\lambda\in (0,1]$, then $P(\lambda) = (1-\lambda)\delta_0 + \lambda\widetilde{\nu}$, and (ii) if $\lambda > 1$, then 
$P(\lambda) = \widetilde{\nu}$, where $\delta_0$ stands for the Dirac mass at $0$. Here, $\widetilde{\nu}(dx) = 
(2\pi x)^{-1}\sqrt{4\lambda-(x-1-\lambda)^2}dx, \, x\in \big((1-\sqrt{\lambda})^2,(1+\sqrt{\lambda})^2\big)$.
\end{definition}
\begin{definition}
A free Poisson process $N$ consists of: (i) a filtration $\left\lbrace \mathscr{A}_t \colon t \geq 0 \right\rbrace $ of von Neumann sub-algebras of $\mathscr{A}$ (in particular, $\mathscr{A}_s \subset \mathscr{A}_t$ for $0 \leq s < t$), (ii) a collection $N = \left\lbrace N_t \colon t\geq 0\right\rbrace $ of self-adjoint operators in $\mathscr{A}_{+}$ ($\mathscr{A}_{+}$ denotes the cone of positive operators in $\mathscr{A}$) such that: (a) $N_0 = 0$ and $N_t \in \mathscr{A}_t$ for all $t \geq 0$, (b) for all $t \geq 0$, $N_t$ has a free Poisson distribution with rate $t$, and (c) for all $0 \leq u < t$, the increment $N_t - N_u$ is free with respect to $\mathscr{A}_u$, and has a free Poisson distribution with rate $t-u$. $\hat N$ will denote the collection of random variables $\hat{N}= \left\lbrace  \hat{N}_t = N_t - t\mathbf{1} \colon t\geq 0 \right\rbrace $, where $\mathbf{1}$ stands for the unit of $\mathscr{A}$. $\hat N$ will be referred to as a compensated free Poisson process.
\end{definition}
\noindent For every integer $n\geq 1$, the space $L^2\left( \R_{+}^n;\mathbb{C}\right) = L^2\left( \R_{+}^n\right)$ denotes the collection of all complex-valued functions on $\R_{+}^n$ that are square-integrable with respect to the Lebesgue measure on $\R_{+}^n$. 
\begin{definition}
\label{mirrorsymdef}
Let $n$ be a natural number and let $f$ be a function in $L^2\left( \R_{+}^n\right)$. 
\begin{enumerate}
\item The adjoint of $f$ is the function $f^{\ast}\left(t_1, \ldots , t_n \right) = \overline{f\left(t_n, \ldots , t_1\right)}$.
\item The function $f$ is called mirror-symmetric if $f = f^{\ast}$, i.e., if $f\left(t_1, \ldots , t_n \right) = \overline{f\left(t_n, \ldots , t_1\right)}$ for almost all $\left( t_1,\ldots , t_n\right) \in \R_{+}^{n}$ with respect to the product Lebesgue measure.
\item The function $f$ is called fully symmetric if it is real-valued and, for any permutation $\sigma$ in the symmetric group $\mathfrak{S}_{n}$, $f\left( t_1, \ldots , t_n\right) = f\left( t_{\sigma(1)}, \ldots , t_{\sigma(n)}\right) $ for almost all $\left( t_1,\ldots , t_n\right) \in \R_{+}^{n}$ with respect to the product Lebesgue measure.
\end{enumerate}
\end{definition}
\begin{definition}
\label{defcontractions}
Let $n,m$ be natural numbers and let $f \in L^2\left( \R_{+}^n\right)$ and $g \in L^2\left( \R_{+}^m\right)$. Let $p \leq n \wedge m$ be a natural number. The $p$-th arc contraction $f \cont{p} g$ of $f$ and $g$ is the $L^2\left( \R_{+}^{n+m-2p}\right)$ function defined by nested integration of the middle $p$ variables in $f \otimes g$:
\begin{equation*}
f  \cont{p} g (t_1,\ldots, t_{n+m - 2p}) =\int_{\R_{+}^{p}}f(t_1,\ldots , t_{n-p},s_1, \ldots , s_p)g(s_p, \ldots , s_1 , t_{n-p+1},\ldots, t_{n+m-2p})ds_1 \cdots ds_p.
\end{equation*}
In the case where $p=0$, the function $f  \cont{0} g$ is just given by $f \otimes g$. Similarly, the $p$-th star contraction $f \smcont{p} g$ of $f$ and $g$ is the $L^2\left( \R_{+}^{n+m-2p+1}\right)$ function defined by nested integration of the middle $p-1$ variables and identification of the first non-integrated variable in $f \otimes g$:
\begin{equation*}
f  \smcont{p} g (t_1,\ldots, t_{n+m - 2p+1}) =\int_{\R_{+}^{p-1}}f(t_1,\ldots , t_{n-p+1},s_1, \ldots , s_{p-1})g(s_{p-1}, \ldots , s_1 , t_{n-p+1},\ldots, t_{n+m-2p+1})ds_1 \cdots ds_{p-1}.
\end{equation*}
\end{definition}
\noindent For $f \in L^2\left(\R_{+}^n\right)$, we denote by $I_{n}^{\hat{N}}(f)$ the multiple integral of $f$ with respect to $\hat{N}$. The space $L^2(\mathcal{X}({\hat{N}}), \varphi) = \{I^{\hat{N}}_n(f) : f\in L^2(\R_{+}^n), n\geq 0\}$ is a unital $\ast$-algebra, with product rule given, for any $n,m\geq 1$, $f \in L^2\left(\R_{+}^n\right)$, $g \in L^2\left(\R_{+}^m\right)$, by
\begin{equation*}
I^{\hat{N}}_n(f)I^{\hat{N}}_m(g) = \sum_{p=0}^{n \wedge m} I^{\hat{N}}_{n+m-2p}\left( f \cont{p} g\right) +  \sum_{p=1}^{n \wedge m} I^{\hat{N}}_{m+n-2p+1}\left( f \smcont{p} g\right)
\end{equation*}
and involution $I_n^{\hat{N}}(f)^{\ast} = I_n^{\hat{N}}(f^{\ast})$. 
\begin{remark}
Observe that it follows from the definition of the involution on the algebra $L^2(\mathcal{X}(\hat{N}), \varphi)$ that operators of the type $I_n^{\hat{N}}(f)$ are self-adjoint if and only if $f$ is mirror-symmetric.
\end{remark}
\begin{definition}
\label{tameddef}
Let $n\geq 1$ be an integer. We say that a sequence $\{f_k : k\geq 1\}\subset L^2(\R_{+}^n)$ is tamed if the following conditions hold: every $f_k$ is bounded and has bounded support and, for every $p\geq 2$ and every $\pi \in \mathcal{P}\left( \bigotimes_{j=1}^{p}n\right) $, the numerical sequence 
\begin{equation*}
\left\lbrace \int_{\pi} \bigotimes_{j=1}^{p}\left| f_k\right| \colon k \geq 1\right\rbrace 
\end{equation*}
is bounded.
\end{definition}
\begin{remark}
There exists sufficient conditions in order for a sequence $\{f_k \colon k \geq 1\}$ to be tamed. It basically consists in requiring that $\{f_k \colon k \geq 1\}$ concentrates asymptotically, without exploding, around a hyperdiagonal: fix $n\geq 2$, and consider a sequence $\{f_k \colon k\geq 1\}\subset L^2(\R_{+}^n)$. Assume that there exist strictly positive numerical sequences $\{M_k, z_k, \alpha_k \colon k\geq 1\}$ such that $\alpha_k/z_k \to 0$ as $k \rightarrow \infty$ and the following properties are satisfied: (a) the support of $f_k$ is contained in the set $\prod_{j=1}^{n}\left( -z_k,z_k \right) $, (b) $|f_k|\leq M_k$, (c) $f_k(t_1,\ldots,t_n) = 0$, whenever there exist $t_i, t_j$ such that $\vert t_i - t_j\vert >\alpha_k$ and (d) for every integer $p\geq n$, the mapping $k\mapsto M_k^p z_k \alpha_k^{p-1}$ is bounded. Then, the sequence $\left\lbrace f_k \colon k\geq 1 \right\rbrace $ is tamed.
\end{remark}

\subsection{Non-crossing partitions, partition integrals and semicircular families}
\noindent A partition of $\left[n \right] = \left\lbrace 1, \ldots , n \right\rbrace $ is a collection of mutually disjoint subsets $b_1 , \ldots , b_r$ of $\left[ n\right] $ such that $B_1 \sqcup \cdots \sqcup B_r = \left[ n\right] $. The subsets are called the blocks of the partition and by convention, we order them by their least elements, i.e., $\min B_i < \min B_j $ if and only if $i < j$. The cardinality of a block $B$ is denoted by $\vert B \vert$. A block is said to be a singleton if it has cardinality one. A partition with only blocks of cardinality two is called a pairing. The set of all partitions of $\left[ n\right] $ is denoted $\mathscr{P}(n)$, the set of all pairings is denoted $\mathscr{P}_2(n)$, the set of all partitions without singletons is denoted $\mathscr{P}_{\geq 2}(n)$ and the set of all partitions without singletons and with at least one block of cardinality greater or equal to three is denoted $\mathscr{P}_{\geq 2+}(n)$. Observe that it holds that $ \mathscr{P}_2(n) \subset \mathscr{P}_{\geq 2}(n) \subset \mathscr{P}(n)$ and $\mathscr{P}_2(n) \sqcup \mathscr{P}_{\geq 2+}(n) =\mathscr{P}_{\geq 2}(n)$. The number of blocks of a partition $\pi \in \mathscr{P}(n)$ is denoted by $\vert \pi \vert$.
\begin{definition}
Let $\pi \in \mathscr{P}(n)$ be a partition of $\left[ n\right] $. $\pi$ is said to have a crossing if there are two distinct blocks $B_1, B_2$ in $\pi$ with elements $x_1,y_1 \in B_1$ and $x_2,y_2\in B_2$ such that $x_1 < x_2 < y_1 < y_2$. If $\pi \in \mathscr{P}(n)$ has no crossings, it is said to be a non-crossing partition. The set of non-crossing partitions of $\left[ n\right] $ is denoted $NC(n)$. The non-crossing elements of $\mathscr{P}_2(n)$, $\mathscr{P}_{\geq 2}(n)$ and $\mathscr{P}_{\geq 2+}(n)$ are denoted respectively by $NC_{\geq 2+}(n)$, $NC_{\geq 2+}(n)$ and $NC_{\geq 2+}(n)$. In that case too, it holds that $ NC_2(n) \subset NC_{\geq 2}(n) \subset NC_(n)$ and $NC_2(n) \sqcup NC_{\geq 2+}(n) = NC_{\geq 2}(n)$.
\end{definition}
\begin{definition}
\label{defpartition1}
Let $n_1, \ldots ,n_r$ be positive integers with $n = n_1 + \cdots + n_r$ and partition the set $\left[ n\right] $ according to these integers by putting $\left[ n\right] = B_1 \sqcup \cdots \sqcup B_r$, where $B_1 = \left\lbrace 1, \ldots , n_1 \right\rbrace $, $B_2 = \left\lbrace n_1 + 1, \ldots , n_1 +n_2 \right\rbrace $ and so forth through $B_r = \left\lbrace n_1 + \cdots + n_{r-1} + 1, \ldots , n_1 + \cdots + n_r \right\rbrace $. Denote this partition by $n_1 \otimes \cdots \otimes n_r$. A partition $\pi \in \mathscr{P}_{\geq 2}(n)$ is said to respect $n_1 \otimes \cdots \otimes n_r$ if no block of $\pi$ contains more than one element from any given block of $n_1 \otimes \cdots \otimes n_r$. For any given subset $\mathscr{Q}(n) \subset \mathscr{P}(n)$, the subset of $\mathscr{Q}(n)$ consisting of all partitions that respect $n_1 \otimes \cdots \otimes n_r$ is denoted $\mathscr{Q}\left( n_1 \otimes \cdots \otimes n_r\right) $.
\end{definition}
\begin{definition}
\label{deflink}
Let $n_1, \ldots ,n_r$ be positive integers and let $\pi \in \mathscr{P}_{\geq 2}\left(n_1 \otimes \cdots \otimes n_r \right)$. Let $B_1, B_2$ be two blocks in $n_1 \otimes \cdots \otimes n_r$. $\pi$ is said to link $b_1$ and $B_2$ if there is a block of $\pi$ containing an element of $B_1$ and an element of $B_2$. Define a graph $C_{\pi}$ whose vertices are the blocks of $n_1 \otimes \cdots \otimes n_r$; $C_{\pi}$ has an edge between $B_1$ and $B_2$ if and only if $\pi$ links $B_1$ and $B_2$. The partition $\pi$ on which the graph is based will be said to be connected with respect to $n_1 \otimes \cdots \otimes n_r$ (or that $\pi$ connects the blocks of $n_1 \otimes \cdots \otimes n_r$) if the graph $C_{\pi}$ is connected. The set $NC_{\geq 2}^{c}\left( n_1 \otimes \cdots \otimes n_r \right) $ will denote the subset of all the partitions in $NC_{\geq 2}^{c}\left( n_1 \otimes \cdots \otimes n_r \right) $ that both respect and connect $n_1 \otimes \cdots \otimes n_r$. Similarly, the set $NC_{\geq 2+}^{c}\left( n_1 \otimes \cdots \otimes n_r \right) $ will denote the subset of all the partitions in $NC_{\geq 2+}\left( n_1 \otimes \cdots \otimes n_r \right) $ that both respect and connect $n_1 \otimes \cdots \otimes n_r$.
\end{definition}
\begin{definition}
\label{defpartitionfunctions}
Let $n \geq 2$ be an integer and let $\pi \in \mathscr{P}_{\geq 2}(n)$. Let $f \colon \R_{+}^{n} \rightarrow \mathbb{C}$ be a measurable function. The partition integral of $f$ with respect to $\pi$, denoted $\int_{\pi}f$, is defined, when it exists, to be the constant
\begin{equation*}
\int_{\pi}f = \int_{\R_{+}^{n}}f\left( t_1,\ldots , t_n\right)\prod_{B \in \pi}\prod_{\left\lbrace i,j\right\rbrace \subset B}\delta\left( t_i - t_j\right)dt_1 \cdots dt_n.  
\end{equation*} 
\end{definition}
\begin{definition}
\label{semicircfamily}
Let $d \geq 2$ be an integer and let $c = \left\lbrace c(i,j) \colon i,j =1, \ldots , d \right\rbrace $ be a positive-definite symmetric matrix. A $d$-dimensional vector $\left(s_1 , \ldots , s_d \right) $ of random variables in $\mathscr{A}$ is said to be a semicircular family with covariance $c$ if for every $n \geq 1$ and every $\left(i_1, \ldots , i_n \right) \in \left[ d\right] ^n $,
\begin{equation*}
\varphi\left(s_{i_1}s_{i_2} \cdots s_{i_n} \right) = \sum_{\pi \in NC_{2}(n)}\prod_{\left\lbrace a,b \right\rbrace \in \pi } c\left( i_{a},i_{b}\right). 
\end{equation*} 
The previous relation implies in particular that, for every $i = 1, \ldots , d$, the random variable $s_i$ has a semicircular distribution with mean zero and variance $c(i,i)$.
\end{definition}

\section{Proof of Theorem \ref{maintheorem}}
\label{proofmaintheo}
\noindent Observe that the equivalence between \textit{(ii)} and \textit{(iii)} is a direct consequence of \cite[Theorem 4.3]{bope1}. As it is clear that \textit{(i)} implies \textit{(iii)}, we are left with proving that \textit{(iii)} implies \textit{(i)}. Assume that \textit{(iii)} holds and recall that by \cite[Theorem 4.3]{bope1}, this is equivalent to the fact that, for each $i = 1, \ldots ,d$, for all $\ell \in \left\lbrace 1,\ldots , n_i -1 \right\rbrace$ and for all $q \in \left\lbrace 0, \ldots, n_i-1\right\rbrace $, we have 
\begin{equation}
\label{assumpintheproof}
f_{k}^{(i)} \cont{\ell} f_{k}^{(i)} \underset{k \rightarrow \infty}{\longrightarrow} 0  \quad \mbox{in}\quad   L^{2}\left(\R_{+}^{2n_i - 2\ell}\right)                        \qquad \mbox{and}         \qquad                             f_{k}^{(i)}\star_{q+1}^{q}f_{k}^{(i)} \underset{k \rightarrow \infty}{\longrightarrow} 0 \quad \mbox{in}\quad   L^{2}\left(\R_{+}^{2n_i - 2q -1}\right).
\end{equation}
Observe that by a straightforward application of Fubini's Theorem along with the mirror-symmetry of the functions $f_{k}^{(i)}$, it holds that $\norm{f_{k}^{(i)}\star_{q}^{q-1}f_{k}^{(i)}}_{L^2\left( \R_{+}\right) } = \norm{f_{k}^{(i)}\star_{1}^{0}f_{k}^{(i)}}_{L^2\left( \R_{+}^{2n_i -1}\right) }$, so that we also have that, for each $i = 1, \ldots ,d$, 
\begin{equation*}
f_{k}^{(i_s)}\star_{1}^{0}f_{k}^{(i)} \underset{k \rightarrow \infty}{\longrightarrow} 0 \quad \mbox{in}\quad   L^{2}\left(\R_{+}^{2n_i -1}\right).
\end{equation*}
In order to show \textit{(i)}, we have to show that any moment in the variables $I_{n_1}^{\hat{N}}\left(f_{k}^{(1)}\right), \ldots , I_{n_d}^{\hat{N}}\left(f_{k}^{(d)}\right)$ converges, as $k$ goes to infinity, to the corresponding moment of the semicircular family $\left( s_1, \ldots , s_d\right) $. Let $r \geq 1$ and $i_1, \ldots , i_r$ be positive integers. Consider the moments 
\begin{equation*}
\varphi\left[I_{n_{i_1}}^{\hat{N}}\left(f_{k}^{(i_1) }\right) \cdots I_{n_{i_r}}^{\hat{N}}\left(f_{k}^{(i_r)}\right)\right].
\end{equation*}
The goal is to prove that these moments converge, as $k$ goes to infinity, to $\varphi\left[s_{i_1} \cdots s_{i_r}\right]$. By Proposition \ref{diagramformula}, we have 
\begin{equation*}
\varphi\left[I_{n_{i_1}}^{\hat{N}}\left(f_{k}^{(i_1) }\right) \cdots I_{n_{i_r}}^{\hat{N}}\left(f_{k}^{(i_r)}\right)\right] = \sum_{\pi \in NC_{\geq 2}\left(n_{i_1} \otimes \cdots \otimes n_{i_r}\right)}\int_{\pi}f_{k}^{(i_1)} \otimes \cdots \otimes f_{k}^{(i_r)}.
\end{equation*}
We can decompose the sum appearing on the right hand side by isolating the pairings and the rest of the partitions in $NC_{\geq 2}\left(n_{i_1} \otimes \cdots \otimes n_{i_r}\right)$, so that
\begin{equation*}
\varphi\left[I_{n_{i_1}}^{\hat{N}}\left(f_{k}^{(i_1) }\right) \cdots I_{n_{i_r}}^{\hat{N}}\left(f_{k}^{(i_r)}\right)\right] = \sum_{\pi \in NC_{2}\left(n_{i_1} \otimes \cdots \otimes n_{i_r}\right)}\int_{\pi}f_{k}^{(i_1)} \otimes \cdots \otimes f_{k}^{(i_r)} + \sum_{\pi \in NC_{\geq 2+}\left(n_{i_1} \otimes \cdots \otimes n_{i_r}\right)}\int_{\pi}f_{k}^{(i_1)} \otimes \cdots \otimes f_{k}^{(i_r)}.
\end{equation*}
In view of \eqref{assumpintheproof},  the argument used in \cite[Proof of Theorem 1.3]{nopesp1} guaranties that 
\begin{equation*}
\sum_{\pi \in NC_{2}\left(n_{i_1} \otimes \cdots \otimes n_{i_r}\right)}\int_{\pi}f_{k}^{(i_1)} \otimes \cdots \otimes f_{k}^{(i_r)} \underset{k \rightarrow \infty}{\longrightarrow} \sum_{\sigma \in NC_{2}(r)} \prod_{\left\lbrace s,t \right\rbrace \in \sigma } c\left( i_s, i_t\right),
\end{equation*}
which is exactly the moment $\varphi\left(s_{i_1}, \ldots, s_{i_r}\right) $ of a semicircular family with covariance matrix $c$. Therefore, in order to conclude the proof, it only remains to prove that 
\begin{equation}
\label{hastogotozero}
\sum_{\pi \in NC_{\geq 2+}\left(n_{i_1} \otimes \cdots \otimes n_{i_r}\right)}\int_{\pi}f_{k}^{(i_1)} \otimes \cdots \otimes f_{k}^{(i_r)} \underset{k \rightarrow \infty}{\longrightarrow} 0.
\end{equation}
As pointed out in \cite[Remark 1.33]{kenopesp1} in the case of pairings, it is always possible to decompose a given partition $\pi \in NC_{\geq 2+}\left(n_{i_1} \otimes \cdots \otimes n_{i_r}\right)$ into a disjoint union of connected partitions $\pi = \pi_1 \sqcup \cdots \sqcup \pi_m$, where for any $q \in \left\lbrace 1, \ldots , m \right\rbrace $, $\pi_q \in NC_{\geq 2}^{c}\left(\bigotimes_{j \in I_q}n_{i_j}\right)$ and where $I_1 \sqcup \cdots \sqcup I_m$ is a partition of the set $\left\lbrace 1, \ldots , r \right\rbrace $. Hence, the partition integral appearing in \eqref{hastogotozero} takes the form
\begin{equation}
\label{decompintoconnectedpartitions}
\int_{\pi}f_{k}^{(i_1)} \otimes \cdots \otimes f_{k}^{(i_r)} = \prod_{q=1}^{m}\int_{\pi_q}\bigotimes_{j \in I_q}f_{k}^{\left( i_j\right) }.
\end{equation}
As $\pi \in NC_{\geq 2+}\left(n_{i_1} \otimes \cdots \otimes n_{i_r}\right)$, $\pi$ contains at least one block $V_{*}$ of size greater or equal to three. Assume that this block $V_{*}$ belongs to the connected partition $\pi_{q^{*}}$ for a certain $q^{*} \in \left\lbrace 1, \ldots , m \right\rbrace $. This implies that $\pi_{q^{*}} \in NC_{\geq 2}^{c}\left(\bigotimes_{j \in I_{q^*}}n_{i_j}\right)$ where $\vert I_{q^*} \vert \geq 3 $. Hence, using \eqref{assumpintheproof} along with Proposition \ref{convzerosecondpart}, we get that 
\begin{equation*}
\int_{\pi_{q^*}}\bigotimes_{j \in I_{q^*}}f_{k}^{\left( i_j\right) } \underset{k \rightarrow \infty}{\longrightarrow} 0,
\end{equation*}
which, in view of \eqref{decompintoconnectedpartitions}, concludes the proof.

\section{Auxiliary results}
\label{auxsection}
\noindent A straightforward generalization of \cite[Theorem 3.15]{bope1} yields the following diagram formula for free Poisson multiple integrals.
\begin{proposition}
\label{diagramformula}
Let $d \geq 2$ and $n_1, \ldots , n_d$ be positive integers and suppose that $f_1,\ldots ,f_d$ are tamed functions with $f_i \in L^{2}\left(\R_{+}^{n_i}\right)$ for all $1 \leq i \leq d$. Then it holds that 
\begin{equation*}
	\varphi\left[I_{n_1}^{\hat{N}}\left(f_{1}\right) \cdots I_{n_d}^{\hat{N}}\left(f_{d}\right)\right] = \sum_{\pi \in NC_{\geq 2}\left(n_1 \otimes \cdots \otimes n_d\right)}\int_{\pi}f_1 \otimes \cdots \otimes f_d.
\end{equation*}
\end{proposition}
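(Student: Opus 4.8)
The plan is to argue by induction on the number $d$ of factors, using only the two-factor product rule recalled in Section \ref{sectionrelelements} together with the two elementary facts $\varphi[I_m^{\hat{N}}(h)]=0$ for $m\geq 1$ and $\varphi[I_0^{\hat{N}}(c)]=c$. For the base case $d=2$, applying $\varphi$ to the product rule for $I_{n_1}^{\hat{N}}(f_1)I_{n_2}^{\hat{N}}(f_2)$ kills every summand except the order-zero one; this summand occurs only when $n_1=n_2$ and equals the full arc contraction $\int_{\R_{+}^{n_1}}f_1(t_1,\ldots,t_{n_1})f_2(t_{n_1},\ldots,t_1)\,dt$, which is exactly $\int_\pi f_1\otimes f_2$ for the unique nested pairing $\pi$ in $NC_{\geq 2}(n_1\otimes n_2)=NC_2(n_1\otimes n_2)$.

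For the inductive step I would single out the last two factors and replace $I_{n_{d-1}}^{\hat{N}}(f_{d-1})I_{n_d}^{\hat{N}}(f_d)$ inside the trace by its expansion from the product rule, obtaining the moment as a finite sum, over the arc contractions $f_{d-1}\cont{p}f_d$ and the star contractions $f_{d-1}\smcont{p}f_d$, of $(d-1)$-fold moments of the form $\varphi[I_{n_1}^{\hat{N}}(f_1)\cdots I_{n_{d-2}}^{\hat{N}}(f_{d-2})\,I_m^{\hat{N}}(f_{d-1}\cont{p}f_d)]$ and its star analogue. Each such $(d-1)$-fold moment is handled by the induction hypothesis, producing a sum of partition integrals indexed by $NC_{\geq 2}$ of the reduced tensor structure $n_1\otimes\cdots\otimes n_{d-2}\otimes m$, where the last group of size $m$ records the uncontracted variables of the merged factor. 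The crux of the proof is then a purely combinatorial bijection: reinstating the variables paired off (in the arc case) or paired off and identified (in the star case) by the chosen contraction turns each partition of the reduced structure into a partition $\pi\in NC_{\geq 2}(n_1\otimes\cdots\otimes n_d)$, and this correspondence is shown to range bijectively over all of $NC_{\geq 2}(n_1\otimes\cdots\otimes n_d)$ with the correct induced integrand $\int_\pi f_1\otimes\cdots\otimes f_d$. Here an arc contraction $\cont{p}$ accounts for $p$ blocks of size two, whereas a star contraction $\smcont{p}$ contributes $p-1$ such pairs together with one variable identification that seeds a block of size at least three; this is precisely the mechanism by which the partitions of $NC_{\geq 2+}$, which are absent from the Wigner diagram formula of \cite{kenopesp1,nopesp1} where only $NC_2$ survives, enter the free Poisson formula. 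I expect this bijection to be the principal obstacle, since one must simultaneously verify that both the non-crossing property (forced by the nested integration of the inner variables in Definition \ref{defcontractions}) and the respect property of Definition \ref{defpartition1} (which holds because every elementary contraction links variables of two distinct factors, so that no block ever collects two variables from a single $f_i$) are preserved at every step of the reduction.

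It remains to justify the manipulations, and this is exactly the role of the tamedness hypothesis: by Definition \ref{tameddef} each partition integral $\int_\pi |f_1|\otimes\cdots\otimes|f_d|$ is finite, so all intermediate contractions lie in the appropriate $L^2$ spaces, the product rule may be applied term by term, and the interchange of the finite summation with $\varphi$ is legitimate. Finally, I would observe that this is precisely the argument of \cite[Theorem 3.15]{bope1}: that proof is algebraic and combinatorial and nowhere uses that the orders $n_i$ or the integrands $f_i$ coincide, so it carries over verbatim to arbitrary orders and arbitrary tamed functions once the bookkeeping above is recorded in this generality.
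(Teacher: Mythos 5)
Your proposal is correct and takes essentially the same route as the paper, which proves this proposition only by the one-line remark that it is a straightforward generalization of \cite[Theorem 3.15]{bope1}; your closing paragraph is exactly that observation, and your induction-via-the-product-rule sketch (base case $d=2$, contraction of the last two factors, and the bijection between contractions plus reduced partitions and $NC_{\geq 2}\left(n_1\otimes\cdots\otimes n_d\right)$, with star contractions seeding the blocks of size at least three) is a faithful reconstruction of how that cited argument actually runs. No gaps worth flagging.
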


\begin{proposition}
\label{convzerosecondpart}
Let $d \geq 2$ and $n_1, \ldots , n_d$ be positive integers. For each $i=1,\dots,d$, let $\curly{f_{k}^{(i)} \colon k \geq 1}$ be a sequence of tamed mirror-symmetric functions in $L^{2}\left(\R_{+}^{n_i}\right)$ such that
\begin{equation}
\label{contgotozero}
	f_{k}^{(i)}\star_{1}^{0}f_{k}^{(i)} \underset{k \rightarrow \infty}{\longrightarrow} 0  \quad \mbox{in}\quad   L^{2}\left(\R_{+}^{2n_i - 1}\right)                        \qquad \mbox{and}         \qquad                             f_{k}^{(i)}\star_{\ell+p}^{\ell}f_{k}^{(i)} \underset{k \rightarrow \infty}{\longrightarrow} 0 \quad \mbox{in}\quad   L^{2}\left(\R_{+}^{2n_i - 2\ell - p}\right),
\end{equation}
for any $\ell \in \left\lbrace 1,\ldots ,n_i-1\right\rbrace $ and $p \in \left\lbrace 0,1\right\rbrace $. Then, for any $r \geq 3$ and any partition $\pi \in NC_{\geq 2}^{c}\left(n_{i_1}\otimes \cdots \otimes n_{i_r}\right)$, it holds that 
\begin{equation*}
	\int_{\pi}f_{k}^{(i_1)}\otimes \cdots \otimes f_{k}^{(i_r)} \underset{k \rightarrow \infty}{\longrightarrow} 0.
\end{equation*}
\end{proposition}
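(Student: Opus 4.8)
The plan is to dominate $\left|\int_{\pi} f_k^{(i_1)}\otimes\cdots\otimes f_k^{(i_r)}\right|$ by a finite product of $L^2$-norms of contractions of the functions $f_k^{(i_j)}$, arranged so that at least one factor is a same-index self-contraction of the type appearing in \eqref{contgotozero} — hence tending to $0$ — while every other factor stays bounded in $k$ thanks to the tamedness hypothesis (Definition \ref{tameddef}).

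First I would set up the reduction to contractions. The partition integral $\int_\pi f_k^{(i_1)}\otimes\cdots\otimes f_k^{(i_r)}$ is the scalar obtained from $f_k^{(i_1)}\otimes\cdots\otimes f_k^{(i_r)}$ by identifying the integration variables lying in a common block of $\pi$. Because $\pi$ is non-crossing, it is planar and nested, so this scalar can be generated by a sequence of arc- and star-contractions of the $f_k^{(i_j)}$ — exactly the mechanism underlying the product formula recalled in Section \ref{sectionrelelements}. I would then apply the Cauchy-Schwarz inequality across a cut of this nested structure to obtain a bound of the form $\left|\int_\pi f_k^{(i_1)}\otimes\cdots\otimes f_k^{(i_r)}\right|\le \prod_\alpha \|C_\alpha^{(k)}\|_{L^2}$, where each $C_\alpha^{(k)}$ is a (possibly mixed) arc- or star-contraction of some of the functions. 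Planarity of $\pi$ is precisely what makes these successive Cauchy-Schwarz applications compatible, producing genuine contractions rather than ill-defined expressions.

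The step that makes the multidimensional statement go through is the passage from mixed to same-index contractions. A factor $C_\alpha^{(k)}$ may involve functions carrying different indices, whereas \eqref{contgotozero} only furnishes decay for same-index self-contractions. To bridge this, I would use a diagonalization inequality: writing a contraction $f\star g$ as a trace $\mathrm{Tr}(A^{\ast}A\,BB^{\ast})$ of the integral operators $A,B$ built from the kernels $f,g$, the Cauchy-Schwarz inequality for the trace gives $\|f\star g\|_{L^2}^2 \le \|f\star f\|_{L^2}\,\|g\star g\|_{L^2}$. Mirror-symmetry of $f,g$ (Definition \ref{mirrorsymdef}) is exactly what turns $A^{\ast}A$ and $BB^{\ast}$ back into the self-contractions $f\star f$ and $g\star g$ of the arc- and star-types listed in \eqref{contgotozero}; thus every mixed factor tends to $0$ as soon as the corresponding diagonal self-contractions do.

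Finally comes the combinatorial core, which I also expect to be the main obstacle. Since $\pi$ is connected and $r\ge 3$, the graph $C_\pi$ of Definition \ref{deflink} is connected on at least three vertices, so it has a vertex of degree at least two: some group $B_\ast$, carrying $f_k^{(i_\ast)}$, is linked by $\pi$ to at least two other groups. Isolating $f_k^{(i_\ast)}$ and folding it against its own copy in the Cauchy-Schwarz step produces a factor $\|f_k^{(i_\ast)}\star f_k^{(i_\ast)}\|_{L^2}$; because $f_k^{(i_\ast)}$ stays attached to the rest of the network, this fold consumes strictly fewer than $n_{i_\ast}$ variables, so it is a proper arc- or star-contraction with $\ell\le n_{i_\ast}-1$, i.e. one of those in \eqref{contgotozero}, which therefore vanishes. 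The complementary factors are $L^2$-norms of contractions of the remaining tamed functions, each bounded uniformly in $k$ by the partition-integral bounds built into Definition \ref{tameddef}, so multiplying the vanishing factor by the bounded ones yields the claim. The delicate points — organizing the iterated Cauchy-Schwarz coherently with the nesting of an arbitrary connected non-crossing partition, and checking that the self-fold always lands in the arc-/star-type recorded in \eqref{contgotozero} (in particular that planarity restricts the identifications to the cases retained there) — are where the real work lies; the threshold $r\ge 3$ is essential, since for $r=2$ a connected non-crossing partition forces the degenerate fold consuming all of $n_{i_\ast}$ variables, which is exactly the non-vanishing covariance term.
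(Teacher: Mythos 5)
Your analytic skeleton matches the paper's: a Cauchy--Schwarz step isolating one mixed contraction norm against a remainder controlled by tameness, the identity $\norm{f\star g}^2=\left\langle f\star f,\,g\star g\right\rangle$ (valid thanks to mirror-symmetry) converting mixed contractions into same-index self-contractions, and connectedness together with $r\geq 3$ ruling out the full-pairing configuration that produces the covariance. But the step you defer as ``where the real work lies'' is essentially the whole proof, and your combinatorial anchor does not deliver it. Two concrete problems. First, you propose to decompose all of $\int_{\pi}f_k^{(i_1)}\otimes\cdots\otimes f_k^{(i_r)}$ into a product $\prod_{\alpha}\norm{C_{\alpha}^{(k)}}$ of contraction norms; for a general connected non-crossing $\pi$ containing blocks of size $\geq 3$ such a global factorization into pairwise arc/star contractions is neither available nor needed --- the paper extracts a \emph{single} contraction and absorbs everything else into one function $\mathscr{G}_k^{\pi,\ell,p}$, whose relevant $L^2$ quantity is bounded directly by the partition-integral bounds built into Definition \ref{tameddef}. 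Second, anchoring at a degree-$\geq 2$ vertex of $C_{\pi}$ does not guarantee that the identifications between $B_{*}$ and a chosen neighbour occupy consecutive end positions of the two functions' variable lists in the reversed order required by Definition \ref{defcontractions}; without that, the ``fold'' is an arbitrary identification pattern, not one of the canonical contractions $\star_{\ell+p}^{\ell}$ to which \eqref{contgotozero} applies.

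The paper closes this gap with a different and sharper combinatorial localization: by Remark 3.10 of \cite{bope1}, any such $\pi$ contains an interval block $V=\left\lbrace b_{n_s}^{s},b_{1}^{s+1}\right\rbrace$ joining the last variable of a group to the first variable of the \emph{adjacent} group; non-crossingness then forces the $\ell$ size-two blocks linking $B_s$ and $B_{s+1}$ into the nested positions $\left\lbrace b_{n_s-j}^{s},b_{j+1}^{s+1}\right\rbrace$ and permits at most $p\in\left\lbrace 0,1\right\rbrace$ larger linking block, so that $f_k^{(i_s)}\star_{\ell+p}^{\ell}f_k^{(i_{s+1})}$ appears literally as a factor of the partition function. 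A single Cauchy--Schwarz plus a case analysis on $(\ell,p)$ --- including the boundary case $p=0$, $n_s\neq n_{s+1}$, $\ell=n_s\wedge n_{s+1}$, where one self-contraction degenerates to $\norm{f_k^{(i_s)}\otimes f_k^{(i_s)}}^{1/2}=\norm{f_k^{(i_s)}}$, which is merely bounded, and only the other factor vanishes --- then finishes the proof, with connectedness used exactly once, to exclude $p=0$, $n_s=n_{s+1}$, $\ell=n_s$. Without this interval-block device (or an equivalent one), your argument does not close.
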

\begin{proof}
As in Definition \ref{defpartition1}, we denote by $B_1, \ldots, B_r$ the blocks of the partition $n_{i_1}\otimes \cdots \otimes n_{i_r}$. Furthermore, we denote by $b_{j}^{i}$ the elements of the block $B_i \in n_{i_1}\otimes \cdots \otimes n_{i_r}$, $1 \leq i \leq r$ and $1 \leq j \leq n_i$. Remark 3.10 in \cite{bope1}, along with the fact that $\pi$ respects $n_{i_1}\otimes \cdots \otimes n_{i_r}$, ensures that $\pi$ contains at least one block $V$ of the form $V = \left\lbrace b_{n_s}^{s},b_{1}^{s+1} \right\rbrace $ for some $s \in \left\lbrace 1, \ldots, r-1\right\rbrace $. Note that according to the terminology introduced in Definition \ref{deflink}, the block $V \in \pi$ links the blocks $B_s$ and $B_{s+1}$ of $n_{i_1}\otimes \cdots \otimes n_{i_r}$. Denote by $\ell$ the number of blocks of $\pi$ of size two (including $V$) linking $B_s$ and $B_{s+1}$. Similarly, denote by $p$ the number of blocks of size strictly greater than two linking $B_s$ and $B_{s+1}$. Note that, as $\pi$ is non-crossing, it is necessarily the case that $p \in \left\lbrace 0,1 \right\rbrace $.
\\~\\
If $\ell = 1$, there is only one block of size two (namely $V$) linking $B_s$ and $B_{s+1}$. If $\ell > 1$, then as $\pi$ is non-crossing and does not contain any singleton, the $\ell$ blocks of size exactly two are necessarily given by 
\begin{equation*}
V,\ \left\lbrace b_{n_s - 1}^{s}, b_{2}^{s+1}\right\rbrace, \ldots , \left\lbrace b_{n_s - \ell +1}^{s}, b_{\ell}^{s+1}\right\rbrace.
\end{equation*}
Observe that, in order for the above blocks to make sense, it necessarily holds that $1 \leq\ell \leq n_s \wedge n_{s+1}$ in the case where $p=0$ and $1 \leq\ell \leq n_s \wedge n_{s+1} -1$ in the case where $p=1$. Additionally, the fact that $\pi$ connects $n_{i_1}\otimes \cdots \otimes n_{i_r}$ excludes the case where $p=0$, $n_s = n_{s+1}$  and $\ell = n_{s}$. This implies that $\ell$ is necessarily such that $1 \leq \ell \leq n_s \wedge n_{s+1} - \delta_{n_s}^{n_{s+1}}$ if $p=0$ and $1 \leq \ell \leq n_s \wedge n_{s+1} - 1$ if $p=1$ (where $\delta_{n_s}^{n_{s+1}}$ denotes the Kronecker delta between $n_{s}$ and $n_{s+1}$).
\\~\\
Furthermore, observe that the number of blocks of $\pi$ linking $B_s$ to other blocks of $n_{i_1}\otimes \cdots \otimes n_{i_r}$ but not to $B_{s+1}$ is given by $n_s - \ell - p$. Similarly, the number of blocks of $\pi$ linking $B_{s+1}$ to other blocks of $n_{i_1}\otimes \cdots \otimes n_{i_r}$ but not to $B_{s}$ is given by $n_{s+1} - \ell - p$.
\\~\\
Having now a clear view of how the blocks $B_s$ and $B_{s+1}$ can be linked together and how they can be linked to the other blocks of $n_{i_1}\otimes \cdots \otimes n_{i_r}$ allows to specify further the form of the partition function $\left( f_{k}^{(i_1)}\otimes \cdots \otimes f_{k}^{(i_r)} \right)_{\pi}$ in $\vert \pi \vert$ variables obtained by identifying the variables $t_i$ and $t_j$ in the argument of the tensor
\begin{equation*}
f_{k}^{(i_1)}\otimes \cdots \otimes f_{k}^{(i_r)}\left( t_1 , \ldots , t_{n_{i_1}+ \cdots + n_{i_r}}\right) = \prod_{p=1}^{r}f_{k}^{(i_p)}\left(t_{n_{i_1} + \cdots + n_{i_{p-1}}+1}, \ldots, t_{n_{i_1} + \cdots + n_{i_{p}}} \right) 
\end{equation*}
if and only if $i$ and $j$ are in the same block of $\pi$. More specifically, we can write, for any $\ell, p$ as above, 
\begin{eqnarray*}
&& \left( f_{k}^{(i_1)}\otimes \cdots \otimes f_{k}^{(i_r)} \right)_{\pi}\left( \cdot, t_1, \ldots , t_{n_s - \ell - p}, \gamma_p, z_1, \ldots , z_{\ell}, s_1, \ldots , s_{n_{s+1} - \ell -p}\right) \\
&& \qquad\qquad\qquad = \mathscr{G}_{k}^{\pi , \ell , p}\left( \cdot, t_1, \ldots , t_{n_s - \ell - p}, \gamma_p, s_1, \ldots , s_{n_{s+1} - \ell -p}\right) \\
&& \qquad\qquad\qquad\qquad\qquad\qquad\times f_{k}^{(i_s)}\left(t_{n_s - \ell - p} , \ldots , t_1, \gamma_p , z_{\ell}, \ldots , z_1 \right) f_{k}^{(i_{s+1})}\left(z_1, \ldots , z_{\ell}, \gamma_p , s_1, \ldots , s_{n_{s+1} - \ell - p} \right),
\end{eqnarray*}
where $\mathscr{G}_{k}^{\pi , \ell , p}$ denotes a function of $\vert \pi \vert - \ell$ variables and where the dot in the arguments of $\left( f_{k}^{(i_1)}\otimes \cdots \otimes f_{k}^{(i_r)} \right)_{\pi}$ and $\mathscr{G}_{k}^{\pi , \ell , p}$ stands for the remaining $\vert \pi \vert - n_{s} - n_{s+1} + 2p + \ell -1$ variables. Observe that, using this decomposition along with the definition of star contractions given in Definition \ref{defcontractions}, it holds that 
\begin{equation*}
\int_{\R_{+}^{\ell}}\left( f_{k}^{(i_1)}\otimes \cdots \otimes f_{k}^{(i_r)} \right)_{\pi}d\lambda^{\ell} = \mathscr{G}_{k}^{\pi , \ell , p} \times f_{k}^{(i_s)}\star_{\ell+p}^{\ell}f_{k}^{(i_{s+1})},
\end{equation*}
so that, recalling the definition of partition integrals given in Definition \ref{defpartitionfunctions},
\begin{eqnarray*}
	&& \int_{\pi}f_{k}^{(i_1)}\otimes \cdots \otimes f_{k}^{(i_r)}  = \int_{\R_{+}^{\vert \pi \vert}}\left( f_{k}^{(i_1)}\otimes \cdots \otimes f_{k}^{(i_r)} \right)_{\pi} d\lambda ^{\vert \pi \vert} = \int_{\R_{+}^{\vert \pi \vert - \ell}}\mathscr{G}_{k}^{\pi , \ell , p} \times f_{k}^{(i_s)}\star_{\ell+p}^{\ell}f_{k}^{(i_{s+1})} d\lambda ^{\vert \pi \vert - \ell} \\
	&& = \int_{\R_{+}^{n_s + n_{s+1} - 2\ell - p}}f_{k}^{(i_s)}\star_{\ell+p}^{\ell}f_{k}^{(i_{s+1})} \int_{\R_{+}^{\vert \pi \vert + \ell + p - n_s - n_{s+1}}}\mathscr{G}_{k}^{\pi , \ell , p} d\lambda ^{\vert \pi \vert + \ell + p - n_s - n_{s+1}}  d\lambda ^{n_s + n_{s+1} - 2\ell - p} \\
	&& \leq \norm{f_{k}^{(i_s)}\star_{\ell+p}^{\ell}f_{k}^{(i_{s+1})}}_{L^2\left( \R_{+}^{n_s + n_{s+1} - 2\ell - p}\right) }\sqrt{\int_{\R_{+}^{n_s + n_{s+1} - 2\ell - p}} \abs{\int_{\R_{+}^{\vert \pi \vert + \ell + p - n_s - n_{s+1}}}\mathscr{G}_{k}^{\pi , \ell , p} d\lambda ^{\vert \pi \vert + \ell + p - n_s - n_{s+1}}}^2  d\lambda ^{n_s + n_{s+1} - 2\ell - p}}.
\end{eqnarray*}
Observe that the tameness of the sequences $\curly{f_{k}^{(i)} \colon k \geq 1}$, $1 \leq i \leq d$, ensures that the sequence 
\begin{equation*}
\curly{\sqrt{\int_{\R_{+}^{n_s + n_{s+1} - 2\ell - p}} \abs{\int_{\R_{+}^{\vert \pi \vert + \ell + p - n_s - n_{s+1}}}\mathscr{G}_{k}^{\pi , \ell , p} d\lambda ^{\vert \pi \vert + \ell + p - n_s - n_{s+1}}}^2  d\lambda ^{n_s + n_{s+1} - 2\ell - p}} \colon k \geq 1}
\end{equation*}
is bounded.
\\~\\
It remains to show that the contraction norm $\norm{f_{k}^{(i_s)}\star_{\ell+p}^{\ell}f_{k}^{(i_{s+1})}}_{L^2\left( \R_{+}^{n_s + n_{s+1} - 2\ell - p}\right) }$ converges to zero as $k$ converges to infinity. Using the mirror-symmetry of the functions $f_{k}^{(i_s)}$ and $f_{k}^{(i_{s+1})}$, it is easy to verify that 
\begin{equation*}
\norm{f_{k}^{(i_s)}\star_{\ell+p}^{\ell}f_{k}^{(i_{s+1})}}_{L^2\left( \R_{+}^{n_s + n_{s+1} - 2\ell - p}\right) }^{2} = \innerprod{f_{k}^{(i_s)}\star_{n_s-\ell}^{n_s-\ell - p}f_{k}^{(i_{s})}   ,   f_{k}^{(i_{s+1})}\star_{n_{s+1}-\ell}^{n_{s+1}-\ell - p}f_{k}^{(i_{s+1})}}_{L^2\left( \R_{+}^{2\ell + p}\right) },
\end{equation*}
so that
\begin{equation}
\label{upperboundsurlanormecont}
\norm{f_{k}^{(i_s)}\star_{\ell+p}^{\ell}f_{k}^{(i_{s+1})}}_{L^2\left( \R_{+}^{n_s + n_{s+1} - 2\ell - p}\right) } \leq \norm{f_{k}^{(i_s)}\star_{n_s-\ell}^{n_s-\ell - p}f_{k}^{(i_{s})} }_{L^2\left( \R_{+}^{2\ell + p}\right) }^{\frac{1}{2}}\norm{f_{k}^{(i_{s+1})}\star_{n_{s+1}-\ell}^{n_{s+1}-\ell - p}f_{k}^{(i_{s+1})} }_{L^2\left( \R_{+}^{2\ell + p}\right) }^{\frac{1}{2}}.
\end{equation}
The last step of this proof will be to show that the contractions appearing in \eqref{upperboundsurlanormecont} are always well defined and that the right-hand side of \eqref{upperboundsurlanormecont} always converges to zero as $k$ goes to infinity in view of the given assumptions. Begin by considering the case where $p=0$ and $n_s = n_{s+1}$. In this case, it holds that $1 \leq \ell \leq n_s -1$ and hence $1 \leq n_s - \ell \leq n_s - 1$. This implies that all the contractions appearing in \eqref{upperboundsurlanormecont} are well defined and that, in view of \eqref{contgotozero}, it holds that 
\begin{equation*}
\norm{f_{k}^{(i_s)}\cont{\ell} f_{k}^{(i_{s+1})}}_{L^2\left( \R_{+}^{2n_s  - 2\ell}\right) } \leq \norm{f_{k}^{(i_s)}\cont{n_s-\ell} f_{k}^{(i_{s})} }_{L^2\left( \R_{+}^{2\ell}\right) }^{\frac{1}{2}}\norm{f_{k}^{(i_{s+1})}\cont{n_{s}-\ell} f_{k}^{(i_{s+1})} }_{L^2\left( \R_{+}^{2\ell  }\right) }^{\frac{1}{2}} \underset{k \rightarrow \infty}{\longrightarrow} 0.
\end{equation*}
In the case where $p=0$, $n_s \neq n_{s+1}$ and $1 \leq \ell \leq n_{s} \wedge n_{s+1} -1$, it holds that $1 \leq n_{s} - \left(n_{s} \wedge n_{s+1}\right) + 1 \leq n_{s} - \ell \leq n_s - 1$ and  $1 \leq n_{s+1} - \left(n_{s} \wedge n_{s+1}\right) + 1 \leq n_{s+1} - \ell \leq n_{s+1} - 1$. This implies that all the contractions appearing in \eqref{upperboundsurlanormecont} are well defined and that, in view of \eqref{contgotozero}, it holds that 
\begin{equation*}
\norm{f_{k}^{(i_s)}\cont{\ell} f_{k}^{(i_{s+1})}}_{L^2\left( \R_{+}^{n_s + n_{s+1}  - 2\ell}\right) } \leq \norm{f_{k}^{(i_s)}\cont{n_s-\ell} f_{k}^{(i_{s})} }_{L^2\left( \R_{+}^{2\ell}\right) }^{\frac{1}{2}}\norm{f_{k}^{(i_{s+1})}\cont{n_{s+1} -\ell} f_{k}^{(i_{s+1})} }_{L^2\left( \R_{+}^{2\ell  }\right) }^{\frac{1}{2}} \underset{k \rightarrow \infty}{\longrightarrow} 0.
\end{equation*}
In the case where $p=0$, $n_s \neq n_{s+1}$ and $\ell = n_{s} \wedge n_{s+1}$, assume without loss of generality that $n_s < n_{s+1}$. This yields $n_s - \ell = 0$ and $1 \leq n_{s+1} - \ell = n_{s+1} - n_{s} \leq  n_{s+1} - 1$. This implies that all the contractions appearing in \eqref{upperboundsurlanormecont} are well defined and that, in view of \eqref{contgotozero} and the tameness of the sequence $\left\lbrace f_{k}^{(i_s)} \colon k\geq 1 \right\rbrace $, it holds that 
\begin{eqnarray*}
\norm{f_{k}^{(i_s)}\cont{n_{s}} f_{k}^{(i_{s+1})}}_{L^2\left( \R_{+}^{n_s + n_{s+1}  - 2\ell}\right) } &\leq & \norm{f_{k}^{(i_s)} \otimes f_{k}^{(i_{s})} }_{L^2\left( \R_{+}^{2n_{s}}\right) }^{\frac{1}{2}}\norm{f_{k}^{(i_{s+1})}\cont{n_{s+1} -n_{s}} f_{k}^{(i_{s+1})} }_{L^2\left( \R_{+}^{2n_{s} }\right) }^{\frac{1}{2}} \\
& = &   \norm{f_{k}^{(i_s)}}_{L^2\left( \R_{+}^{n_{s}}\right) }\norm{f_{k}^{(i_{s+1})}\cont{n_{s+1} -n_{s}} f_{k}^{(i_{s+1})} }_{L^2\left( \R_{+}^{2n_{s} }\right) }^{\frac{1}{2}}  \underset{k \rightarrow \infty}{\longrightarrow} 0.
\end{eqnarray*}
Finally, in the case where $p=1$, it is always the case that $1 \leq \ell \leq n_{s} \wedge n_{s+1} -1$ so that $0 \leq n_{s} - \left(n_{s} \wedge n_{s+1} \right) \leq n_{s } - \ell - 1 \leq n_{s} - 2$ and $0 \leq n_{s+1} - \left(n_{s} \wedge n_{s+1} \right) \leq n_{s +1} - \ell - 1 \leq n_{s+1} - 2$. This implies that all the contractions appearing in \eqref{upperboundsurlanormecont} are well defined and that, in view of \eqref{contgotozero}, it holds that 
\begin{equation*}
\norm{f_{k}^{(i_s)}\star_{\ell+1}^{\ell}f_{k}^{(i_{s+1})}}_{L^2\left( \R_{+}^{n_s + n_{s+1} - 2\ell - 1}\right) } \leq \norm{f_{k}^{(i_s)}\star_{n_s-\ell}^{n_s-\ell - 1}f_{k}^{(i_{s})} }_{L^2\left( \R_{+}^{2\ell + 1}\right) }^{\frac{1}{2}}\norm{f_{k}^{(i_{s+1})}\star_{n_{s+1}-\ell}^{n_{s+1}-\ell - 1}f_{k}^{(i_{s+1})} }_{L^2\left( \R_{+}^{2\ell + 1}\right) }^{\frac{1}{2}} \underset{k \rightarrow \infty}{\longrightarrow} 0.
\end{equation*}
\end{proof}

\end{document}